\documentclass[12pt]{amsart}
\usepackage{mathrsfs}
\usepackage{mathabx}
\usepackage{enumerate}
\usepackage{skak}
\usepackage{bbm}
\usepackage{skull}
\usepackage{mathrsfs, mathtools}
\usepackage{stmaryrd}
\usepackage{amsmath,amssymb,amsfonts, mathabx}
\usepackage[all,cmtip]{xy}
\usepackage{a4wide}
\usepackage[mathscr]{eucal}
\usepackage[toc,page]{appendix}
\usepackage{verbatim}
\usepackage{dsfont}

\usepackage{extarrow}
\usepackage{enumerate}
\usepackage{fontenc}
\usepackage[T1]{fontenc}
\usepackage{graphicx}
\usepackage[colorlinks=true, linkcolor=blue]{hyperref}
\usepackage{latexsym}
\usepackage{mathrsfs}
\usepackage{mathtext}
\usepackage{tikz}
\usepackage{textcomp}
\usepackage{upgreek}
\usepackage{xy}

\usetikzlibrary{arrows}
\usetikzlibrary{matrix}
\usetikzlibrary{shapes}
\usetikzlibrary{snakes}
\usetikzlibrary{matrix}

\DeclareMathOperator{\Add}{\textup{Add}}
\DeclareMathOperator{\Alg}{\textup{Alg}}
\DeclareMathOperator{\Ann}{\textup{Ann}}
\DeclareMathOperator{\Arr}{\textup{Arr}}
\DeclareMathOperator{\Art}{\textup{Art}}
\DeclareMathOperator{\Ass}{\textup{Ass}}
\DeclareMathOperator{\Autsh}{\underline{\textup{Aut}}}
\DeclareMathOperator{\Bi}{\textup{B}}
\DeclareMathOperator{\CAdd}{\textup{CAdd}}
\DeclareMathOperator{\CAlg}{\textup{CAlg}}
\DeclareMathOperator{\CMon}{\textup{CMon}}
\DeclareMathOperator{\CPMon}{\textup{CPMon}}
\DeclareMathOperator{\CRings}{\textup{CRings}}
\DeclareMathOperator{\CSMon}{\textup{CSMon}}
\DeclareMathOperator{\CaCl}{\textup{CaCl}}
\DeclareMathOperator{\Cart}{\textup{Cart}}
\DeclareMathOperator{\Cl}{\textup{Cl}}
\DeclareMathOperator{\Coh}{\textup{Coh}}
\DeclareMathOperator{\Coker}{\textup{Coker}}
\DeclareMathOperator{\Der}{\textup{Der}}
\DeclareMathOperator{\End}{\textup{End}}
\DeclareMathOperator{\Endsh}{\underline{\textup{End}}}
\DeclareMathOperator{\Ext}{\textup{Ext}}
\DeclareMathOperator{\Extsh}{\underline{\textup{Ext}}}
\DeclareMathOperator{\FAdd}{\textup{FAdd}}
\DeclareMathOperator{\FCoh}{\textup{FCoh}}
\DeclareMathOperator{\FGrad}{\textup{FGrad}}
\DeclareMathOperator{\FLoc}{\textup{FLoc}}
\DeclareMathOperator{\FMod}{\textup{FMod}}
\DeclareMathOperator{\FPMon}{\textup{FPMon}}
\DeclareMathOperator{\FRep}{\textup{FRep}}
\DeclareMathOperator{\FSMon}{\textup{FSMon}}
\DeclareMathOperator{\FVect}{\textup{FVect}}
\DeclareMathOperator{\Fibr}{\textup{Fibr}}
\DeclareMathOperator{\Fix}{\textup{Fix}}
\DeclareMathOperator{\Fl}{\textup{Fl}}
\DeclareMathOperator{\Fr}{\textup{Fr}}
\DeclareMathOperator{\Funct}{\textup{Funct}}
\DeclareMathOperator{\GAlg}{\textup{GAlg}}
\DeclareMathOperator{\GExt}{\textup{GExt}}
\DeclareMathOperator{\GHom}{\textup{GHom}}
\DeclareMathOperator{\GL}{\textup{GL}}
\DeclareMathOperator{\GMod}{\textup{GMod}}
\DeclareMathOperator{\GRis}{\textup{GRis}}
\DeclareMathOperator{\GRiv}{\textup{GRiv}}
\DeclareMathOperator{\Gl}{\textup{Gl}}
\DeclareMathOperator{\Grad}{\textup{Grad}}
\DeclareMathOperator{\Hilb}{\textup{Hilb}}
\DeclareMathOperator{\Hl}{\textup{H}}
\DeclareMathOperator{\Homsh}{\underline{\textup{Hom}}}
\DeclareMathOperator{\ISym}{\textup{Sym}^*}
\DeclareMathOperator{\Imm}{\textup{Im}}
\DeclareMathOperator{\Irr}{\textup{Irr}}
\DeclareMathOperator{\Iso}{\textup{Iso}}
\DeclareMathOperator{\Isosh}{\underline{\textup{Iso}}}
\DeclareMathOperator{\LAdd}{\textup{LAdd}}
\DeclareMathOperator{\LAlg}{\textup{LAlg}}
\DeclareMathOperator{\LMon}{\textup{LMon}}
\DeclareMathOperator{\LPMon}{\textup{LPMon}}
\DeclareMathOperator{\LRings}{\textup{LRings}}
\DeclareMathOperator{\LSMon}{\textup{LSMon}}
\DeclareMathOperator{\Left}{\textup{L}}
\DeclareMathOperator{\Lex}{\textup{Lex}}
\DeclareMathOperator{\ML}{\textup{ML}}
\DeclareMathOperator{\MLex}{\textup{MLex}}
\DeclareMathOperator{\Mon}{\textup{Mon}}
\DeclareMathOperator{\Ob}{\textup{Ob}}
\DeclareMathOperator{\Obj}{\textup{Obj}}
\DeclareMathOperator{\PDiv}{\textup{PDiv}}
\DeclareMathOperator{\PGL}{\textup{PGL}}
\DeclareMathOperator{\PML}{\textup{PML}}
\DeclareMathOperator{\PMLex}{\textup{PMLex}}
\DeclareMathOperator{\PMon}{\textup{PMon}}
\DeclareMathOperator{\Picsh}{\underline{\textup{Pic}}}
\DeclareMathOperator{\Pro}{\textup{Pro}}
\DeclareMathOperator{\Proj}{\textup{Proj}}
\DeclareMathOperator{\QAdd}{\textup{QAdd}}
\DeclareMathOperator{\QAlg}{\textup{QAlg}}
\DeclareMathOperator{\QCoh}{\textup{QCoh}}
\DeclareMathOperator{\QMon}{\textup{QMon}}
\DeclareMathOperator{\QPMon}{\textup{QPMon}}
\DeclareMathOperator{\QRings}{\textup{QRings}}
\DeclareMathOperator{\QSMon}{\textup{QSMon}}
\DeclareMathOperator{\Rings}{\textup{Rings}}
\DeclareMathOperator{\Riv}{\textup{Riv}}
\DeclareMathOperator{\SFibr}{\textup{SFibr}}
\DeclareMathOperator{\SMLex}{\textup{SMLex}}
\DeclareMathOperator{\SMex}{\textup{SMex}}
\DeclareMathOperator{\SMon}{\textup{SMon}}
\DeclareMathOperator{\SchI}{\textup{SchI}}
\DeclareMathOperator{\Sh}{\textup{Sh}}
\DeclareMathOperator{\Soc}{\textup{Soc}}
\DeclareMathOperator{\Specsh}{\underline{\textup{Spec}}}
\DeclareMathOperator{\Stab}{\textup{Stab}}
\DeclareMathOperator{\Supp}{\textup{Supp}}
\DeclareMathOperator{\Sym}{\textup{Sym}}
\DeclareMathOperator{\TMod}{\textup{TMod}}
\DeclareMathOperator{\Top}{\textup{Top}}
\DeclareMathOperator{\Tor}{\textup{Tor}}
\DeclareMathOperator{\alt}{\textup{ht}}
\DeclareMathOperator{\car}{\textup{char}}
\DeclareMathOperator{\degtr}{\textup{degtr}}
\DeclareMathOperator{\depth}{\textup{depth}}
\DeclareMathOperator{\divis}{\textup{div}}
\DeclareMathOperator{\et}{\textup{et}}
\DeclareMathOperator{\ffpSch}{\textup{ffpSch}}
\DeclareMathOperator{\h}{\textup{h}}
\DeclareMathOperator{\ilim}{\displaystyle{\lim_{\longrightarrow}}}
\DeclareMathOperator{\indim}{\textup{inj dim}}
\DeclareMathOperator{\lf}{\textup{LF}}
\DeclareMathOperator{\op}{\textup{op}}
\DeclareMathOperator{\ord}{\textup{ord}}
\DeclareMathOperator{\pd}{\textup{pd}}
\DeclareMathOperator{\plim}{\displaystyle{\lim_{\longleftarrow}}}
\DeclareMathOperator{\pr}{\textup{pr}}
\DeclareMathOperator{\pt}{\textup{pt}}
\DeclareMathOperator{\rk}{\textup{rk}}
\DeclareMathOperator{\tr}{\textup{tr}}
\DeclareMathOperator{\type}{\textup{r}}
\DeclareMathOperator*{\colim}{\textup{colim}}
\usepackage[colorinlistoftodos]{todonotes}

%
%
%
%

\theoremstyle{plain}
\newtheorem{thm}{Theorem}[section]
\newtheorem{lem}[thm]{Lemma}

\newtheorem{prop}[thm]{Proposition}

\theoremstyle{plain}
\newtheorem{thmI}{Theorem}
\newtheorem{thmII}{Theorem}


\theoremstyle{definition}

\newtheorem{defn}[thm]{Definition} 

\newtheorem{rmk}[thm]{Remark}

\numberwithin{thm}{section}
\newcounter{x}\setcounter{x}{1}


\newcommand{\red}{{\rm red}}

\newcommand{\EF}{{\rm EFin}}
\newcommand{\Et}{{\rm Et}}
\newcommand{\codim}{{\rm codim}}

\newcommand{\Pic}{{\rm Pic}}

\newcommand{\Div}{{\rm Div}}

\newcommand{\Hom}{{\rm Hom}}

\newcommand{\Loc}{{\rm Loc}}

\newcommand{\Spec}{{\rm Spec \,}}

\newcommand{\Aff}{{\rm Aff}}

\newcommand{\Gal}{{\rm Gal}}

\newcommand{\Ker}{{\rm Ker}}
\newcommand{\Aut}{{\rm Aut}}


\newcommand{\sB}{{\mathcal B}}

\newcommand{\sO}{{\mathcal O}}

\newcommand{\sS}{{\mathcal S}}

\newcommand{\sX}{{\mathcal X}}
\newcommand{\sY}{{\mathcal Y}}
\newcommand{\sZ}{{\mathcal Z}}
\newcommand{\A}{{\mathbb A}}

\newcommand{\C}{{\mathbb C}}

\newcommand{\E}{{\mathbb E}}
\newcommand{\F}{{\mathbb F}}

\newcommand{\M}{{\mathbb M}}
\newcommand{\N}{{\mathbb N}}

\newcommand{\Q}{{\mathbb Q}}
\newcommand{\R}{{\mathbb R}}

\newcommand{\Z}{{\mathbb Z}}

\newcommand{\NN}{\textup{N}}

\newcommand{\Mod}{\text{\sf Mod}}

\newcommand{\Vect}{\text{\sf Vect}}
\newcommand{\Rep}{\text{\sf Rep}}
\newcommand{\id}{{\rm id\hspace{.1ex}}}

\newcommand{\ind}{{\text{\sf ind}\hspace{.1ex}}}
\newcommand{\Cov}{{\mathcal Cov}}

\setcounter{section}{0}

\begin{document}
\title{Essentially Finite Vector Bundles on  Normal Pseudo-proper Algebraic Stacks}

\author{Fabio Tonini,  Lei Zhang }
\address{ Fabio Tonini\\
    Freie Universit\"at Berlin\\
    FB Mathematik und Informatik\\
    Arnimallee 3\\ Zimmer 112A\\
    14195 Berlin\\ Deutschland }
\email{tonini@math.hu-berlin.de}
 \address{ Lei Zhang\\
    Freie Universit\"at Berlin\\
    FB Mathematik und Informatik\\
    Arnimallee 3\\ Zimmer 112A\\
    14195 Berlin\\ Deutschland }
\email{l.zhang@fu-berlin.de}

\thanks{This work was supported by the European Research Council (ERC) Advanced Grant 0419744101 and the Einstein Foundation}
\date{\today}

\global\long\def\A{\mathbb{A}}

\global\long\def\Ab{(\textup{Ab})}

\global\long\def\C{\mathbb{C}}

\global\long\def\Cat{(\textup{cat})}

\global\long\def\Di#1{\textup{D}(#1)}

\global\long\def\E{\mathcal{E}}

\global\long\def\F{\mathbb{F}}

\global\long\def\GCov{G\textup{-Cov}}

\global\long\def\Gcat{(\textup{Galois cat})}

\global\long\def\Gfsets#1{#1\textup{-fsets}}

\global\long\def\Gm{\mathbb{G}_{m}}

\global\long\def\GrCov#1{\textup{D}(#1)\textup{-Cov}}

\global\long\def\Grp{(\textup{Grps})}

\global\long\def\Gsets#1{(#1\textup{-sets})}

\global\long\def\HCov{H\textup{-Cov}}

\global\long\def\MCov{\textup{D}(M)\textup{-Cov}}

\global\long\def\MHilb{M\textup{-Hilb}}

\global\long\def\N{\mathbb{N}}

\global\long\def\PGor{\textup{PGor}}

\global\long\def\PGrp{(\textup{Profinite Grp})}

\global\long\def\PP{\mathbb{P}}

\global\long\def\Pj{\mathbb{P}}

\global\long\def\Q{\mathbb{Q}}

\global\long\def\RCov#1{#1\textup{-Cov}}

\global\long\def\RR{\mathbb{R}}

\global\long\def\Sch{\textup{Sch}}

\global\long\def\WW{\textup{W}}

\global\long\def\Z{\mathbb{Z}}

\global\long\def\acts{\curvearrowright}

\global\long\def\alA{\mathscr{A}}

\global\long\def\alB{\mathscr{B}}

\global\long\def\arr{\longrightarrow}

\global\long\def\arrdi#1{\xlongrightarrow{#1}}

\global\long\def\catC{\mathscr{C}}

\global\long\def\catD{\mathscr{D}}

\global\long\def\catF{\mathscr{F}}

\global\long\def\catG{\mathscr{G}}

\global\long\def\comma{,\ }

\global\long\def\covU{\mathcal{U}}

\global\long\def\covV{\mathcal{V}}

\global\long\def\covW{\mathcal{W}}

\global\long\def\duale#1{{#1}^{\vee}}

\global\long\def\fasc#1{\widetilde{#1}}

\global\long\def\fsets{(\textup{f-sets})}

\global\long\def\iL{r\mathscr{L}}

\global\long\def\id{\textup{id}}

\global\long\def\la{\langle}

\global\long\def\odi#1{\mathcal{O}_{#1}}

\global\long\def\ra{\rangle}

\global\long\def\set{(\textup{Sets})}

\global\long\def\sets{(\textup{Sets})}

\global\long\def\shA{\mathcal{A}}

\global\long\def\shB{\mathcal{B}}

\global\long\def\shC{\mathcal{C}}

\global\long\def\shD{\mathcal{D}}

\global\long\def\shE{\mathcal{E}}

\global\long\def\shF{\mathcal{F}}

\global\long\def\shG{\mathcal{G}}

\global\long\def\shH{\mathcal{H}}

\global\long\def\shI{\mathcal{I}}

\global\long\def\shJ{\mathcal{J}}

\global\long\def\shK{\mathcal{K}}

\global\long\def\shL{\mathcal{L}}

\global\long\def\shM{\mathcal{M}}

\global\long\def\shN{\mathcal{N}}

\global\long\def\shO{\mathcal{O}}

\global\long\def\shP{\mathcal{P}}

\global\long\def\shQ{\mathcal{Q}}

\global\long\def\shR{\mathcal{R}}

\global\long\def\shS{\mathcal{S}}

\global\long\def\shT{\mathcal{T}}

\global\long\def\shU{\mathcal{U}}

\global\long\def\shV{\mathcal{V}}

\global\long\def\shW{\mathcal{W}}

\global\long\def\shX{\mathcal{X}}

\global\long\def\shY{\mathcal{Y}}

\global\long\def\shZ{\mathcal{Z}}

\global\long\def\st{\ | \ }

\global\long\def\stA{\mathcal{A}}

\global\long\def\stB{\mathcal{B}}

\global\long\def\stC{\mathcal{C}}

\global\long\def\stD{\mathcal{D}}

\global\long\def\stE{\mathcal{E}}

\global\long\def\stF{\mathcal{F}}

\global\long\def\stG{\mathcal{G}}

\global\long\def\stH{\mathcal{H}}

\global\long\def\stI{\mathcal{I}}

\global\long\def\stJ{\mathcal{J}}

\global\long\def\stK{\mathcal{K}}

\global\long\def\stL{\mathcal{L}}

\global\long\def\stM{\mathcal{M}}

\global\long\def\stN{\mathcal{N}}

\global\long\def\stO{\mathcal{O}}

\global\long\def\stP{\mathcal{P}}

\global\long\def\stQ{\mathcal{Q}}

\global\long\def\stR{\mathcal{R}}

\global\long\def\stS{\mathcal{S}}

\global\long\def\stT{\mathcal{T}}

\global\long\def\stU{\mathcal{U}}

\global\long\def\stV{\mathcal{V}}

\global\long\def\stW{\mathcal{W}}

\global\long\def\stX{\mathcal{X}}

\global\long\def\stY{\mathcal{Y}}

\global\long\def\stZ{\mathcal{Z}}

\global\long\def\then{\ \Longrightarrow\ }

\global\long\def\L{\textup{L}}

\global\long\def\l{\textup{l}}

\makeatletter 
\providecommand\@dotsep{5} 
\makeatother 

\setcounter{section}{0}
\maketitle
\begin{abstract} Let $X$ be a normal, connected and projective variety over an algebraically closed field $k$. In \cite{BDS} and \cite{AM} it is proved that a vector bundle $V$ on $X$ is essentially finite if and only if it is trivialized by a proper surjective morphism $f:Y\arr X$. In this paper we introduce a different approach to this problem which allows to extend the results to normal, connected and strongly pseudo-proper algebraic stack of finite type over an arbitrary field $k$.
\end{abstract}
\section*{Introduction}
Let $k$ be a base field and $X$ be a proper, connected and reduced scheme over $k$ with a rational point $x\in X(k)$. In \cite{Nori} M. Nori introduced the \emph{Nori fundamental group scheme} $\pi^\NN(X,x)$, which classifies torsors over $X$ under finite $k$-group schemes and with a trivialization over $x$, and proved that the category $\Rep (\pi^\NN(X,x))$ of its finite $k$-representations can be identified with the subcategory of $\Vect(\stX)$ of vector bundles which are \emph{essentially finite} (see \ref{essentially finite}).

In \cite{BDS} and \cite{BDS2}, I. Biswas and J. P. Dos Santos gave a more geometric characterization of essentially finite vector bunldles. If $X$ is a smooth, connected and projective variety over an algebraically closed field they showed that a vector bundle $V$ on $X$ is essentially finite if and only if it is trivialized by a proper surjective morphism $f:Y\arr X$, that is $f^*V$ is a free vector bundle. This result has then be generalized to normal varieties in \cite{AM}. In this paper we present a new proof of this result which apply to more general $X$ and does not require the use of semistable sheaves. Let us introduce some notions before stating our results.

A category $\sX$ fibered in groupoid over a field $k$ is \textit{pseudo-proper} (resp. \textit{strongly pseudo-proper}) if for all vector bundles (resp.  finitely presented sheaves) $E$ on $\sX$ the $k$-vector space $\Hl^0(\sX,E)$ is finite dimensional. It is also required to satisfy a finiteness condition which is automatic for algebraic stacks of finite type over $k$ (see \ref{infl and pseudo-proper}).
Proper algebraic stacks, finite stacks and affine gerbes are examples of strongly pseudo-proper fibred categories (see \cite[Example 7.2, pp. 20]{BV}). This is the generalization of \cite{BDS} and \cite{AM} we present.

\begin{thmI}\label{Dos Santos and Biswas} Let $\sX$ be a connected, normal and strongly pseudo-proper algebraic stack of finite type over $k$. Then a vector bundle $V$ on $\sX$ is essentially finite if and only if it is trivialized by a surjective morphism $f: \sY\arr \sX$ between algebraic stacks such that $f_*\sO_{\sY}$ is a coherent sheaf. 
\end{thmI}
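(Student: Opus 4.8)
The plan is to argue through the Tannakian description of essentially finite bundles. Since $\sX$ is connected, normal and strongly pseudo-proper it carries a Nori fundamental gerbe $\Pi^{\NN}_{\sX/k}$, a profinite gerbe over $k$ equipped with a morphism $\sX\arr\Pi^{\NN}_{\sX/k}$ whose pullback identifies $\Vect(\Pi^{\NN}_{\sX/k})$ with the essentially finite bundles on $\sX$; writing $\Pi^{\NN}_{\sX/k}=\varprojlim_{i}\Gamma_{i}$ with $\Gamma_{i}$ finite gerbes, every essentially finite bundle is pulled back from some $\Gamma_{i}$. For the direction ``essentially finite $\Rightarrow$ trivializable by such an $f$'', I would write $V=\phi^{*}W$ with $\phi\colon\sX\arr\Gamma$ a map to a finite gerbe; a finite gerbe over $k$ always admits a finite, flat, surjective morphism from $\Spec k'$ with $k'/k$ finite (take any point over a finite extension; it is a torsor under the finite automorphism group scheme of that point), and pulling it back along $\phi$ yields a finite — in particular with coherent $f_{*}\odi{\sY}$ — surjective $f\colon\sY\arr\sX$ trivializing $V$.

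For the converse, suppose $f\colon\sY\arr\sX$ is surjective, $f_{*}\odi{\sY}$ coherent, $f^{*}V\cong\odi{\sY}^{\oplus n}$. The first and crucial step — which is what lets one avoid semistable sheaves — is to reduce to the case that $f$ is finite. Set $\sA=f_{*}\odi{\sY}$, a coherent sheaf of $\odi{\sX}$-algebras, and let $g\colon\sX'=\Specsh_{\sX}\sA\arr\sX$. Then $g$ is finite and surjective, and $f$ factors as $\sY\xrightarrow{h}\sX'\xrightarrow{g}\sX$ with $h_{*}\odi{\sY}\cong\odi{\sX'}$ ($g$ being affine, $g_{*}$ is faithful). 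The projection formula gives $h_{*}(f^{*}V)=h_{*}h^{*}(g^{*}V)\cong g^{*}V\otimes h_{*}\odi{\sY}\cong g^{*}V$, while also $h_{*}(f^{*}V)\cong h_{*}(\odi{\sY}^{\oplus n})\cong\odi{\sX'}^{\oplus n}$, so $g^{*}V$ is already free and we may replace $f$ by $g$. Restricting to, and normalizing, an irreducible component of $\sX'$ surjecting onto $\sX$ (one exists since $\sX$, connected and normal, is integral), we may also assume $\sX'$ is normal, integral and, being finite over $\sX$, still strongly pseudo-proper. It therefore suffices to show that a bundle $V$ on $\sX$ trivialized by a finite surjective morphism from such an $\sX'$ is essentially finite.

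To prove this I would pass to a ``Galois closure'': a finite surjective $\pi\colon\widetilde{\sX}\arr\sX$ with $\widetilde{\sX}$ normal integral, carrying an action of a finite $k$-group scheme $G$ with coarse quotient $\sX$, and dominating $\sX'$ — taking the étale Galois hull of the separable part of the extension at the generic point and dominating its purely inseparable part by an iterated $\mu_{p}$-tower (each $\mu_{p}$-layer being generically a torsor). Then $\pi^{*}V$ is free and inherits the canonical $G$-equivariant structure from $G$-invariance of $\pi$; on the quotient stack $\sZ=[\widetilde{\sX}/G]$ — which again satisfies the hypotheses of the theorem — the pullback of $V$ is trivialized by the $G$-torsor $\widetilde{\sX}\arr\sZ$. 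Since a $G$-equivariant structure on a non-equivariantly trivial bundle is, via the cocycle condition, precisely a representation of $G$ (over $\Hl^{0}(\widetilde{\sX},\odi{\widetilde{\sX}})$, together with descent data), the pullback of $V$ to $\sZ$ is pulled back from a finite gerbe, i.e. it is essentially finite on $\sZ$; equivalently it embeds as a sub-bundle of a power of the essentially finite regular-representation bundle of the $G$-cover.

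The step I expect to be the main obstacle is descending this from $\sZ$ back to $\sX$. The map $\Pi^{\NN}_{\sZ/k}\arr\Pi^{\NN}_{\sX/k}$ is \emph{not} an isomorphism — passing to $\sZ$ introduces stacky structure along the non-free (ramification) locus of the $G$-action, which genuinely enlarges the fundamental gerbe — so one must show that a bundle on $\sX$ whose pullback to $\sZ$ is essentially finite is already essentially finite on $\sX$; this amounts to controlling exactly how the Nori fundamental gerbe changes under this finite modification (equivalently, to a descent statement for essentially finite bundles along $\sZ\arr\sX$). The further delicate points are the inseparable part of the Galois closure, the need for $G$ to be defined over $k$ rather than over a residue field, and the bookkeeping of the Tannakian formalism in the possibly non-neutral setting; normality of $\sX$ is used repeatedly — for integrality, to keep kernels and cokernels of maps of vector bundles locally free, and to extend covers across codimension-two loci. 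By contrast, the reduction of a general $f$ to a finite cover via $\Specsh_{\sX}(f_{*}\odi{\sY})$ and the projection formula, and the easy direction, are routine.
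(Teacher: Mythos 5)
Your reduction of a general $f$ to a finite cover via $\Specsh_\sX(f_*\odi\sY)$ and your treatment of the easy direction agree with the paper, but in the converse direction the proposal has two genuine gaps. The first is precisely the step you single out as the main obstacle and then leave open: descending essential finiteness from the quotient stack $\sZ$ back to $\sX$. You say this ``amounts to controlling how the Nori fundamental gerbe changes'' under $\sZ\arr\sX$, but you give no such comparison, and in fact none is needed: the paper's resolution is short and is the real crux of the whole proof. One first checks that $h_*\odi\sZ\simeq\odi\sX$ for the map $h\colon\sZ\arr\sX$ (this is where normality enters: smooth-locally the invariants $B^{S_n}$ of the integral closure land in $A$ because the cover is generically a torsor and $A$ is normal). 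Then, writing $h^*V$ as the pullback of a bundle along $\sZ\arr\Gamma$ with $\Gamma$ a finite gerbe mapping faithfully --- hence affinely --- to $\sB\GL_m$, the stack $\sX\times_{\sB\GL_m}\Gamma$ is affine over $\sX$, so the induced map $\sZ\arr\sX\times_{\sB\GL_m}\Gamma$ factors through $\Specsh_\sX(h_*\odi\sZ)=\sX$; this yields a factorization $\sX\arr\Gamma\arr\sB\GL_m$ of the classifying map of $V$, i.e.\ $V$ is essentially finite. Without this argument (or an equivalent substitute) your proof does not conclude, since, as you correctly observe, $\Pi^\NN_{\sZ/k}\arr\Pi^\NN_{\sX/k}$ is in general not an isomorphism.

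The second gap is the construction of the ``Galois closure'' $\widetilde\sX$ itself. For a stack there is no function-field Galois theory to invoke: the generic point of $\sX$ is a gerbe, and over a smooth chart $U\arr\sX$ the Galois hull of the generic fibre of $\sY\times_\sX U\arr U$ is neither canonical nor functorial in $U$, so these chart-level closures do not glue to a finite cover of $\sX$ with a group action --- this is exactly the failure the paper flags in the introduction and circumvents by replacing the Galois group with the symmetric group: to the degree-$n$ \'etale generic fibre one attaches the canonical $S_n$-torsor $\stS_{E/U}$ of Proposition \ref{etn to bsn} (the complement of the diagonals in $E^n$), normalizes $U$ inside it, and the functoriality of this construction is what allows gluing into $\Theta_\sX\arr\sZ\arr\sX$ with $\Theta_\sX\arr\sZ$ an $S_n$-torsor. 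Relatedly, your proposal to dominate the purely inseparable part by an iterated $\mu_p$-tower does not produce a single finite $k$-group scheme acting generically as a torsor with quotient $\sX$, and is not worked out; the paper instead removes inseparability beforehand via Lemma \ref{Frobenius pullback of essentially finite} (if $F^{m*}V$ is essentially finite then so is $V$), replacing $\sY$ by a reduced Frobenius twist so that the generic fibre becomes \'etale, after which Lemma \ref{finite etale trivialization} applies on $\sZ$.
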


We stress that the proof of this result is stacky in nature and not only because we already start with stacks. The key example that enlightens the strategy is  when $X=\stX$ is a normal variety and $f\colon Y=\stY\arr \sX$ is the normalization of $X$ in a Galois extension $L/k(X)$ with group $G$. Then we have a splitting $Y\arr \stZ=[Y/G] \arrdi \pi X$. Since $Y$ is a $G$-torsor over  $\sZ$ and $\pi^*V$ is trivialized by this torsor, $\pi^*V$ is essentially finite. The conclusion that $V$ is essentially finite  then follows formally from the fact that $\pi_*\odi\stZ\simeq \odi X$. Thus the key step here is to pass to a new space $\sZ$ which may not be a scheme. 
When  $f$ is general one can reduce the problem to the above situation. This is not possible for stacks and in this case we introduce a different kind of ``Galois cover'' where the group $G$ is replaced by the symmetric group.

In \cite[Corollary I]{TZ2} we prove a variant of Theorem \ref{Dos Santos and Biswas} without any regularity requirement on $\stX$ but assuming that $f$ is proper and \textit{flat}. The proofs of those results are independent.

For completeness we also deal with the analogous result of \cite{BDS2} in our context:

\begin{thmII}\label{Dos Santos and Biswas0} Let $\sX$ be a connected, normal and strongly pseudo-proper algebraic stack of finite type over $k$ and $f : \sY \arr \sX$ be a surjective morphism of algebraic stacks such that $f_*\sO_{\sY}$ is a coherent sheaf. Consider the full subcategory of $\Vect(\stX)$
$$\Vect(\sX)_f :=\{\ V \in \Vect(\sX)\ |\  f^*V\text{ is trivial }\}$$
and set $L=\Hl^0(\stX,\odi\stX)$, which is a finite field extension of $k$. We have
\begin{enumerate}
\item if $\Spec(f_*\sO_{\sY})$ is reduced (e.g. if $\stY$ is reduced), then $\Vect(\stX)_f$ is an $L$-Tannakian category;
\item  if the generic fiber of $\Spec(f_*\sO_{\sY})\arr \sX$ is \'etale then the affine gerbe over $L$ corresponding to the full sub tannakian category generated by $\Vect(\sX)_f\subseteq \EF(\Vect(\stX))$ is finite and \'etale.
\end{enumerate}
\end{thmII}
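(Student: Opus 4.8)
The plan is to reduce to a finite cover and then work Tannakianly. Put $g\colon\sZ:=\Specsh_\sX(f_*\odi\sY)\arr\sX$; it is finite, since $f_*\odi\sY$ is a finite $\odi\sX$-algebra, and surjective because $f$ is, and $f$ factors as $\sY\arrdi{h}\sZ\arrdi{g}\sX$ with $h_*\odi\sY=\odi\sZ$. For a vector bundle $E$ on $\sZ$ the projection formula gives $h_*h^*E\simeq E\otimes h_*\odi\sY\simeq E$, so $h^*g^*V$ is trivial if and only if $g^*V$ is; hence $\Vect(\sX)_f=\Vect(\sX)_g$. Since $\sZ$ is reduced whenever $\sY$ is, and $g$ has the same generic fibre as $\Specsh(f_*\odi\sY)\arr\sX$, we may work throughout with the finite cover $g$; also $L=\Hl^0(\sX,\odi\sX)$ is a field finite over $k$ by connectedness and strong pseudo-properness. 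By Theorem~\ref{Dos Santos and Biswas}, $\Vect(\sX)_g\subseteq\EF(\Vect(\sX))$, which is an $L$-Tannakian category (the representations of the Nori fundamental gerbe $\Pi$ of $\sX/L$, cf.\ \cite{BV}). As $g^*$ is exact, monoidal, and sends $\odi\sX$ to a trivial bundle, $\Vect(\sX)_g$ is a full subcategory stable under $\oplus$, $\otimes$, duals, and containing the unit; so for~(1) it suffices to prove stability under subobjects (stability under quotients then follows by duality, making $\Vect(\sX)_g$ an $L$-Tannakian subcategory).

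To prove stability under subobjects, take $W\hookrightarrow V$ in $\EF(\Vect(\sX))$ with $V\in\Vect(\sX)_g$; we must show $g^*W$ is a trivial bundle on $\sZ$, and we may assume $\sZ$ connected since triviality on $\sZ$ means triviality on each connected component. Let $\nu\colon\sZ^\nu=\bigsqcup_i\sZ^\nu_i\arr\sZ$ be the normalisation of the reduced stack $\sZ$ (a finite morphism). Each $\sZ^\nu_i$ is the normalisation of an irreducible component $\sZ_i$ of $\sZ$, hence normal and connected, and it is strongly pseudo-proper of finite type over $k$ (being finite over $\sX$), so $\EF(\Vect(\sZ^\nu_i))$ is again Tannakian over $R_i:=\Hl^0(\sZ^\nu_i,\odi{})$. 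Pulling back along $g\nu_i$ preserves essential finiteness and exactness of short exact sequences of vector bundles, so it carries $W\hookrightarrow V$ to a subobject $(g\nu_i)^*W\hookrightarrow(g\nu_i)^*V\simeq\odi{\sZ^\nu_i}^{\,n}$ of the trivial object $\mathbf 1^{\oplus n}$ in a Tannakian category. A subobject of $\mathbf 1^{\oplus n}$ is a trivial object and the inclusion is given by an $R_i$-linear map; thus the subbundle $(g\nu_i)^*W\subseteq\odi{\sZ^\nu_i}^{\,n}$ is constant, i.e.\ the classifying morphism $\sZ\arr\mathrm{Gr}(r,n)$ of $g^*W\subseteq\odi\sZ^{\,n}$ (with $r=\rk W$) becomes the constant map to a single closed point after composition with the finite surjective $\sZ^\nu_i\arr\sZ_i$, and is therefore constant on each component $\sZ_i$. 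Because $\sZ$ is reduced and connected, these constants agree along the intersections of the components, so $\sZ\arr\mathrm{Gr}(r,n)$ is globally constant and $g^*W$ is trivial. Here reducedness of $\sZ$ is used precisely to descend triviality from $\sZ^\nu$ back to $\sZ$; normality enters only through Theorem~\ref{Dos Santos and Biswas} and through the Tannakian structure on $\EF(\Vect(\sZ^\nu_i))$. This proves~(1).

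For~(2), let $\eta$ be the generic point of $\sX$ and $\Pi_\eta$ the Nori fundamental gerbe over $L$ of its residual gerbe. Since $\sX$ is normal and integral, a morphism of essentially finite bundles vanishing generically is zero and a subobject of $V|_\eta$ extends to a subbundle of $V$, so the canonical morphism $\Pi_\eta\arr\Pi$ is faithfully flat; consequently the monodromy group of any $V\in\Vect(\sX)_g$ coincides with the monodromy of $V|_\eta\in\Rep(\Pi_\eta)$. Now $V|_\eta$ is trivialised by the \emph{finite étale} $\kappa(\eta)$-algebra $\sZ_\eta$, so its monodromy is a quotient of $Q_\eta:=\Pi_\eta/N$, where $N$ is the normal closure of the image of the Nori fundamental gerbe of $\sZ_\eta$; and $Q_\eta$ is a \emph{finite étale} group scheme over $L$, because $\sZ_\eta\arr\eta$ is finite étale and over a field no nontrivial infinitesimal torsor becomes trivial on an étale cover. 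Hence every object of $\Vect(\sX)_f$ has monodromy a quotient of the single finite étale group $Q_\eta$, so the affine gerbe over $L$ attached to the full tannakian subcategory generated by $\Vect(\sX)_f$ is itself a quotient of $Q_\eta$, and is therefore finite and étale over $L$. This proves~(2).

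The step I expect to be the main obstacle is the pair of ``generic-fibre'' facts used in~(2): that $\Pi_\eta\arr\Pi$ is faithfully flat — equivalently, that essentially finite bundles on a normal pseudo-proper stack, and homomorphisms between them, are detected at the generic point — and that an étale cover of a possibly imperfect field trivialises no nontrivial infinitesimal torsor. Part~(1) is by contrast mostly formal once one has the reduction to normal covers and the observation that a subobject of $\mathbf 1^{\oplus n}$ is trivial; the care needed there is in the normalisation-and-descent step, which is exactly where the hypotheses on $\sZ$ (reducedness in~(1), étale generic fibre in~(2)) do their work — controlling components of $\sZ$ that fail to dominate $\sX$ and non-reduced structure.
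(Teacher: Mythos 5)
Your part (1) is correct, though it takes a detour the paper avoids: after the same reduction to the finite cover $g\colon\sZ=\Spec(f_*\odi\sY)\arr\sX$, the paper simply decomposes the reduced stack $\sZ$ into \emph{connected} components $\sZ_i$, notes each is reduced, connected and pseudo-proper, hence inflexible over $\Hl^0(\odi{\sZ_i})$ by \cite[Theorem 4.4]{TZ}, so $\EF(\Vect(\sZ_i))$ is Tannakian and a sub/quotient of a trivial object is trivial; freeness on each component of the fixed rank $\rk W$ then gives freeness on $\sZ$ with no gluing needed. Your normalization $\sZ^\nu_i$ is unnecessary (reducedness already gives inflexibility; normality is not what makes $\EF$ Tannakian), and it is what forces your extra descent step through the Grassmannian; that step does go through (constancy of the classifying map on each irreducible component, agreement at intersections, factorization through the residue field by reducedness), but it is extra work for the same conclusion.

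Part (2) has a genuine gap, and it is exactly the step you flag. The objects your argument rests on are not well-posed: there is no ``Nori fundamental gerbe over $L$'' of the residual gerbe at $\eta$, since $\Hl^0$ of the generic point is the transcendental extension $\kappa(\eta)\supsetneq L$ (so it is neither pseudo-proper nor inflexible over $L$); and over a field every vector bundle is trivial, so $\EF(\Vect)$ of the generic point is all of $\Vect$ and the ``monodromy of $V|_\eta$'' carries no information. Consequently the three claims doing all the work --- that $\Pi_\eta\arr\Pi$ is faithfully flat, that a subobject of $V|_\eta$ extends to a subbundle of $V$, and that the monodromy of $V$ equals that of $V|_\eta$ and is bounded by a single finite \'etale quotient $Q_\eta$ --- are neither proved nor even correctly formulated, and they encapsulate precisely the hard content of the theorem. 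Note also that finiteness of the gerbe attached to $\Vect(\sX)_f$ is genuinely non-formal (in case (1) it fails in general, as the paper remarks), so a uniform bound must actually be constructed. The paper does this by globalizing the generic \'etale datum: it associates to the degree-$n$ \'etale generic fiber its $S_n$-torsor, normalizes $\sX$ in it to obtain $\Theta_\sX\arr\sZ\arr\sX$ with $\Theta_\sX\arr\sZ$ an $S_n$-torsor and, by normality of $\sX$, $h_*\odi\sZ=\odi\sX$; then Lemma \ref{finite etale trivialization} identifies $\Vect(\sZ)_\pi$ with the representations of a gerbe affine over $\sB_k S_n$ (hence finite \'etale), and full faithfulness of $\Vect(\sX)\arr\Vect(\sZ)$ exhibits the gerbe of $\Vect(\sX)_f$ as a quotient of it. Some such construction over $\sX$ itself (not just at $\eta$) is what your proof is missing.
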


Notice that in the above hypothesis the category $\EF(\Vect(\stX))$ is indeed an $L$-Tannakian category (see \ref{inflexible}). In situation $(1)$ the gerbe associated with $\Vect(\stX)_f$ is in general not finite, as already shown in \cite{BDS2}.

\section*{Acknowledgement}
 We would like to thank B. Bhatt, N. Borne, H. Esnault, M. Olsson, M. Romagny and A. Vistoli for helpful conversations and suggestions received.

\section{Preliminaries}
In this section we fix a base field $k$. We will collect here some preliminary results and definitions needed for the next section. All fibered categories considered will be fibered in groupoids.

We will freely talk about affine gerbes over a field (often improperly called just gerbes) and Tannakian categories and use their properties. Please refer to \cite[Appendix B]{TZ} for details.

\begin{defn}\label{essentially finite}\cite[Definition 7.7]{BV}
Let $\shC$ be an additive and monoidal category. An object $E\in\shC$ is called finite if there exist $f\neq g \in \N[X]$ polynomials with natural coefficients and an isomorphism $f(E)\simeq g(E)$, it is called \emph{essentially finite} if it is a kernel of a map of finite objects of $\shC$. We denote by $\EF(\shC)$ the full subcategory of $\shC$ consisting of essentially finite objects.
\end{defn}

\begin{defn}\label{infl and pseudo-proper}\cite[Definition 5.3 and Definition 7.1]{BV}
 Let $\stX$ be a fibred category over $k$. It is called \emph{inflexible} over $k$ if any map from $\stX$ to a finite stack factors through a finite gerbe. It is called  \emph{pseudo-proper} (resp. \emph{strongly pseudo-proper}) if
  \begin{enumerate}
\item there exists a quasi-compact scheme $U$ and a representable morphism $U\arr X$ which is faithfully flat, quasi-compact and quasi-separated;
\item for any vector bundle (resp. finitely presented quasi-coherent sheaf) $E$ on $\sX$ the $k$-vector space $\Hl^0(\sX,E)$ is finite dimensional.
\end{enumerate}  
\end{defn}

\begin{rmk}\label{inflexible}
 By \cite[Theorem 7.9, pp. 22]{BV}, if $\stX$ is a pseudo-proper and inflexible fibred category then $\EF(\Vect(\stX))$ is a $k$-Tannakian category, which corresponds to the so called Nori fundamental gerbe $\Pi^\NN_{\stX/k}$.
 
 Recall that a pseudo-proper fibred category $\stX$ which is inflexible satisfies $\Hl^0(\odi\stX)=k$ and the converse is true if $\stX$ is reduced, quasi-compact and quasi-separated (see \cite[Theorem 4.4]{TZ}). In particular in Theorem \ref{Dos Santos and Biswas} and \ref{Dos Santos and Biswas0} the algebraic stack $\stX$ is automatically inflexible over the field $\Hl^0(\odi\stX)$.
\end{rmk}

Recall that if $\stX$ is an algebraic stack with a map $\lambda\colon \stX\arr \Gamma$ to a finite gerbe and $W\in \Vect(\Gamma)$ then $\lambda^*W$ is essentially finite. Indeed all vector bundles on $\Gamma$ are essentially finite by \cite[Proposition 7.8]{BV} and $\lambda^*$ is exact and monoidal because $\lambda$ is flat. 

We start by looking at a special case of Theorem \ref{Dos Santos and Biswas}, that is the case of a torsor.

\begin{lem} \label{finite etale trivialization}
Let $\sX$ be a pseudo-proper and inflexible fibered category over $k$ and let $f\colon \stY\arr\stX$ be a torsor under some finite $k$-group scheme $G$. If $V$ is a vector bundle on $\sX$ which is trivialized by $f$ then  $V$ is an essentially finite vector bundle on $\sX$. Moreover the full subcategory of $\Vect(\stX)$ of vector bundles trivialized by $f$ is a $k$-Tannakian category whose associated gerbe $\Gamma$ is the Nori reduction of $\stX\arr \sB_k G$, that is,  it is the image of the unique map $\Pi_{\sX/k}^N\arr \sB_kG$ which corresponds to $\sX\arr \sB_k G$.
\end{lem}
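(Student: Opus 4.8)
The plan is to rephrase the torsor as a morphism $\lambda\colon\stX\arr\sB_kG$, factor it through the Nori fundamental gerbe of $\stX$, and identify the bundles trivialised by $f$ with the pull-back along $\stX$ of the representation category of the resulting finite gerbe. A $G$-torsor $f\colon\stY\arr\stX$ is the same datum as such a $\lambda$ with $\stY=\stX\times_{\sB_kG}\Spec k$, where $u\colon\Spec k\arr\sB_kG$ is the atlas. Since $\sB_kG$ is a finite gerbe and $\stX$ is pseudo-proper and inflexible, by \ref{inflexible} (and \cite[Theorem 7.9]{BV}) $\lambda$ factors, uniquely up to unique isomorphism, as $\stX\arrdi{\tau}\Pi^\NN_{\stX/k}\arrdi{\bar\lambda}\sB_kG$ with $\tau^*\colon\Vect(\Pi^\NN_{\stX/k})\arr\EF(\Vect(\stX))$ an equivalence. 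Write $\bar\lambda=\iota\circ q$ with $q\colon\Pi^\NN_{\stX/k}\arr\Gamma$ faithfully flat and $\iota\colon\Gamma\arr\sB_kG$, where $\Gamma$ is the image of $\bar\lambda$, a finite gerbe, and put $\mu:=q\circ\tau\colon\stX\arr\Gamma$. Since $q^*$ identifies $\Rep(\Gamma)$ with the Tannakian subcategory of $\Rep(\Pi^\NN_{\stX/k})$ generated by the essential image of $\bar\lambda^*$, the composite $\mu^*\colon\Rep(\Gamma)\arr\Vect(\stX)$ is fully faithful; its essential image $\stT$ is therefore a $k$-Tannakian category with associated gerbe $\Gamma$ (the Nori reduction of $\stX\arr\sB_kG$), and every object of $\stT$ is essentially finite, being a pull-back of a bundle on the finite gerbe $\Gamma$ (as recalled just before the statement). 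The lemma reduces to the equality $\Vect(\stX)_f=\stT$.

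For the inclusion $\stT\subseteq\Vect(\stX)_f$, observe that $\lambda\circ f$ coincides with the composite $\stY\arr\Spec k\arrdi{u}\sB_kG$, so that $f^*\lambda^*W\simeq\odi\stY\otimes_kW$ is free for every $W\in\Rep(\sB_kG)$, i.e.\ $\lambda^*W\in\Vect(\stX)_f$. As $\stT$ is the Tannakian subcategory of $\EF(\Vect(\stX))$ generated by the $\lambda^*W$, it suffices to show that $\Vect(\stX)_f$ is stable, inside $\EF(\Vect(\stX))$, under $\otimes$, $\oplus$, duals and subquotients. Only subquotients need an argument: if $V'$ is a subquotient, inside $\EF(\Vect(\stX))$, of a bundle built from the $\lambda^*W$ by $\otimes$ and $\oplus$, then $V'$ is essentially finite and its flat pull-back $f^*V'$ is an essentially finite bundle that is a quotient of a sub-bundle of the free bundle $f^*V$; since an essentially finite bundle which is a sub-bundle — or, dually, a quotient-bundle — of a free bundle is itself free (a globally generated essentially finite bundle is trivial, because in $\Rep(\Pi^\NN_{\stX/k})$ a quotient of a trivial object is trivial), $f^*V'$ is free, whence $V'\in\Vect(\stX)_f$.

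The reverse inclusion is the heart of the matter. It is enough to prove $\Vect(\stX)_f\subseteq\lambda^*\Rep(\sB_kG)$: since $\lambda=\iota\circ\mu$ gives $\lambda^*\Rep(\sB_kG)\subseteq\mu^*\Rep(\Gamma)=\stT$ and $\stT\subseteq\Vect(\stX)_f$ by the previous step, all three categories then coincide and $\Vect(\stX)_f\simeq\Rep(\Gamma)$ is $k$-Tannakian with gerbe $\Gamma$. So let $V\in\Vect(\stX)_f$, $r=\rk V$. Applying fppf descent along the finite faithfully flat morphism $f$ identifies $\Vect(\stX)$ with the category of $G$-equivariant vector bundles on $\stY$, under which $V$ corresponds to the free bundle $\odi\stY^{\,r}$ equipped with a $G$-equivariant structure; using $\stY\times_\stX\stY\simeq\stY\times_kG$, the isomorphism classes of such structures form the non-abelian cohomology set $\Hl^1(G,\GL_r(B))$, where $B:=\Hl^0(\stY,\odi\stY)$ is a finite-dimensional $k$-algebra on which $G$ acts with $B^G=\Hl^0(\stX,\odi\stX)=k$ — this last equality (\ref{inflexible}) being precisely the point where inflexibility enters — while the structures that come from $\lambda^*\Rep(\sB_kG)$ are exactly the image of $\Hl^1(G,\GL_r(k))$. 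Thus the statement reduces to the surjectivity of $\Hl^1(G,\GL_r(k))\arr\Hl^1(G,\GL_r(B))$, and I expect this — the passage from ``trivialised by the torsor'' to ``pulled back from a representation'' — to be the main obstacle, to be settled by a direct study of the finite $G$-algebra $B$ with $B^G=k$ (the claim is immediate when $B=k$, e.g.\ when $\stX$ and $\stY$ are geometrically connected).
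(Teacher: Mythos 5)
Your outline of the easy inclusion and the identification of the target gerbe are in the right spirit, but the decisive step is missing, and the statement you propose to reduce it to is false. You reduce the hard inclusion to $\Vect(\stX)_f\subseteq\lambda^*\Rep(\sB_kG)$, i.e.\ to surjectivity of $\Hl^1(G,\GL_r(k))\arr\Hl^1(G,\GL_r(B))$, and you leave exactly that step open ("the main obstacle"). In fact no argument can close it in that form: a bundle trivialized by $f$ descends to the Nori reduction $\Gamma$, which is in general a \emph{proper} sub-gerbe of $\sB_kG$, and a representation of $\Gamma$ need not be the restriction of a representation of $G$ (it is only a subquotient of one). Concretely, let $X$ be an elliptic curve over $\bar{k}$ ($\car k\neq 3$), let $Y'\arr X$ be the connected $\Z/3$-torsor attached to a nontrivial $3$-torsion line bundle $L$, and let $Y=Y'\times^{\Z/3}S_3$ be the induced $S_3$-torsor with $f\colon Y\arr X$. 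Then $f^*L$ is trivial, but the only rank-one bundles of the form $\lambda^*W$ with $W\in\Rep(S_3)$ are trivial (both characters of $S_3$ restrict trivially to $\Z/3$), so $L\in\Vect(X)_f\setminus\lambda^*\Rep(S_3)$; correspondingly, with $B=k\times k$ one has $\Hl^1(S_3,\GL_1(k))\cong\Z/2$ mapping to $\Hl^1(S_3,\GL_1(B))\cong\Z/3$, which is not surjective. The paper proves the hard inclusion by a different mechanism, with $\Gamma$ (not $\sB_kG$) as the target: by \cite[Lemma 5.12, Lemma 7.11]{BV} the Nori reduction $\mu\colon\stX\arr\Gamma$ satisfies $\mu_*\odi\stX\simeq\odi\Gamma$ and $\Gamma\arr\sB_kG$ is faithful, hence affine, so $\stY\arr\Spec A:=\Spec(k)\times_{\sB_kG}\Gamma$ has $\delta_*\odi\stY\simeq\odi{\Spec A}$ by flat base change; pulling the counit $\mu^*\mu_*V\arr V$ back along the faithfully flat $f$ identifies it with $\delta^*\delta_*f^*V\arr f^*V$, an isomorphism because $f^*V$ is free, whence $V\simeq\mu^*\mu_*V$ is pulled back from the finite gerbe $\Gamma$. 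If you want to keep a cocycle-theoretic formulation, it must be made relative to $\Gamma$, at which point it is essentially this descent statement.

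There is also a secondary gap in your proof of $\stT\subseteq\Vect(\stX)_f$: the subquotient step invokes "a globally generated essentially finite bundle is trivial" for bundles on $\stY$, justified by a property of $\Rep(\Pi^{\NN}_{\stX/k})$, which lives on $\stX$. On $\stY$ no such Tannakian framework is available in the generality of the lemma: $\stY$ can be disconnected and non-reduced (e.g.\ a torsor under an infinitesimal group scheme), so its components need not be inflexible, and even when they are one must control ranks and the fields $\Hl^0(\odi{\stY_i})$; the paper only runs an argument of this shape under the stronger hypotheses of Theorem II(1) (strong pseudo-properness and $\stY$ reduced). The paper's proof of this inclusion is instead geometric and unconditional: for $W\in\Vect(\Gamma)$ the bundle $f^*\mu^*W$ is pulled back from $\Spec A$, and $A$ is a finite $k$-algebra (over $\bar{k}$ one has $\Gamma=\sB_kH$ and $\Spec A\simeq G/H$), so it is free.
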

\begin{proof}
Consider the following $2$-cartesian diagram 
$$\xymatrix{\sY\ar[rr]\ar[d]^-f&& \Spec(k)\ar[d]\\ \sX\ar[rr]& & \sB_kG}$$
By \cite[Lemma 5.12, Lemma 7.11]{BV} there is a finite gerbe $\Gamma$ and a factorization of $\stX\arr \sB_k G$ as: $\lambda\colon \stX\arr \Gamma$ such that $\lambda_*\odi\stX\simeq\odi\Gamma$; $\Gamma\arr\sB_k G$ faithful. Notice that, since $\Gamma$ is finite, the map $\Gamma\arr \sB_k G$ is affine. See for instance \cite[Remark B7]{TZ}.
In particular we also have a $2$-Cartesian diagram
$$\xymatrix{\sY\ar[rr]^-{\delta}\ar[d]^-f&& \Spec(A)\ar[d]\\ \sX\ar[rr]^{\lambda}& & \Gamma}$$
By cohomology and base change along the flat map $\Spec A\arr \Gamma$ we have $\delta_*\sO_{\sY}\cong\sO_{\Spec(A)}$. Pulling back the adjunction $\lambda^*\lambda_*V\arr V$ along $f$ we get the  map $\delta^*\delta_*f^*V\cong f^*\lambda^*\lambda_*V\arr f^*V$, which is an isomorphism as $f^*V\cong \sO_{\sY}^{\oplus m}$ and $\delta_*\sO_{\sY}\cong\sO_{\Spec(A)}$. Since $f$ is faithfully flat it follows that $\lambda^*\lambda_*V\arr V$ is an isomorphism. Thus $V$ is the pull back of a vector bundle on a finite gerbe and hence it is essentially finite.

The map $\Vect(\Gamma)\arr\Vect(\stX)$ is fully faithful and embeds $\Vect(\Gamma)$ as a sub Tannakian category of $\EF(\Vect(\stX))$ by   \cite[Theorem 7.13, pp. 24]{BV}. To conclude the proof we just have to show that if $W\in \Vect(\Gamma)$ then $f^*\lambda^*W$ is free. Thus it is enough to show that $A$ is a finite $k$-algebra and we can assume $k$ algebraically closed. In this case $\Gamma=\sB_k H$, where $H$ is a closed subgroup of $G$, and a direct computation shows that $\Spec A\simeq G/H$ which is a finite $k$-scheme.
\end{proof}

The following result is also a variant of Theorem \ref{Dos Santos and Biswas}, which is useful because it will allow us to replace an arbitrary finite map by a generically \'etale one. The same result is present in \cite[Lemma 2.3]{TZ2}. We include it here for completeness.
\begin{lem}\label{Frobenius pullback of essentially finite}
Let $\stX$ be an inflexible and pseudo-proper fibred category over $k$, $V\in \Vect(\stX)$ and denote by $F\colon \stX\arr\stX$ the absolute Frobenius. If there exists $m\in \N$ such that $F^{m*}V\in \Vect(\stX)$ is essentially finite then $V$ is essentially finite too.
\end{lem}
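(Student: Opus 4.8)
If $\car k=0$ the absolute Frobenius is the identity and there is nothing to prove, so assume $\car k=p>0$. It suffices to treat the case $m=1$: granting that, if $F^{m*}V$ is essentially finite one applies the case $m=1$ successively to the bundles $F^{(m-1)*}V,\,F^{(m-2)*}V,\dots,\,V$, using $F^{*}(F^{(j-1)*}V)\cong F^{j*}V$. Finally, since essential finiteness is a property of the additive tensor category $\Vect(\stX)$ alone, and since by the base change behaviour of the Nori fundamental gerbe (\cite{BV}, see also \cite{TZ}) it can be tested after the extension $\overline k/k$, we may and do assume $k$ algebraically closed.

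So assume $F^{*}V$ essentially finite. By \cite[Theorem 7.9]{BV} and the discussion preceding Lemma \ref{finite etale trivialization}, the classifying morphism of $F^{*}V$ has a Nori reduction $\lambda\colon\stX\to\Gamma$ with $\Gamma$ a finite gerbe, $\lambda_{*}\odi{\stX}\cong\odi{\Gamma}$, and $\lambda^{*}E\cong F^{*}V$ for some $E\in\Vect(\Gamma)$. Because the absolute Frobenius is a natural transformation of the identity functor, $\lambda\circ F_{\stX}=F_{\Gamma}\circ\lambda$, and hence $F^{i*}V\cong\lambda^{*}(F_{\Gamma}^{(i-1)*}E)$ for every $i\ge 1$. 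Thus all the Frobenius pullbacks $F^{i*}V$, $i\ge 1$, are pulled back along the \emph{same} morphism $\lambda$ from the finite gerbe $\Gamma$, so the Tannakian subcategory $\sD_{1}$ of $\EF(\Vect(\stX))\cong\Rep(\Pi^{\NN}_{\stX/k})$ generated by $\{\,F^{i*}V\st i\ge 1\,\}$ is contained in $\lambda^{*}\Rep(\Gamma)$, hence is a \emph{finite} Tannakian category; write $\sD_{1}=\Rep(\Gamma_{1})$ with $\Gamma_{1}$ a finite gerbe.

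Now let $\sD$ be the Tannakian category generated by $\sD_{1}$ and $V$ inside $\Vect(\stX)$. By construction $\sD$ is stable under $F^{*}$, and since $F^{*}V\in\sD_{1}$ one has $F^{*}(\sD)\subseteq\sD_{1}$. Dually, the Tannakian group $\pi$ of $\sD$ sits in an extension $1\to N\to\pi\to\Gamma_{1}\to 1$, where $N$ — the Tannakian group of the quotient of $\sD$ by $\sD_{1}$, hence generated by the single object $V$ — is an affine group scheme of finite type over $k$. The inclusion $F^{*}(\sD)\subseteq\sD_{1}$ means precisely that $N$ acts trivially on $F^{*}W$ for every $W\in\sD$; since $F^{*}$, as an endofunctor of the Tannakian category $\sD$, corresponds to the relative Frobenius $F_{\pi/k}$ of $\pi$, and the objects of $\sD$ are jointly faithful as representations of $\pi$, this forces $F_{\pi/k}(N)=\{1\}$, i.e. $N$ lies in the first Frobenius kernel $\ker(F_{\pi/k})$. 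But the relative Frobenius $F_{\pi/k}\colon\pi\to\pi^{(p)}$ of an affine group scheme of finite type over the perfect field $k$ is a finite morphism, so $\ker(F_{\pi/k})$ is a finite group scheme; hence $N$ is finite, $\pi$ is finite, and $\sD=\Rep(\pi)$ is a finite Tannakian category. In particular $V\in\sD\subseteq\EF(\Vect(\stX))$, which is the claim.

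The step I expect to be the main obstacle is the construction of $\sD$: one must make sense of ``the Tannakian category generated by $V$'' even though $\Vect(\stX)$ is not a priori abelian. This is where the hypotheses on $\stX$ are used — pseudo-properness makes the relevant $\Hl^{0}$'s finite dimensional, and reducedness (which follows from normality) makes the unit $\odi{\stX}\to F_{*}\odi{\stX}$ injective, so that $F^{*}$ is a faithful functor on $\Vect(\stX)$; moreover, $F^{*}V$ being essentially finite (hence numerically flat) forces $V$ itself to be numerically flat, since Frobenius pullback reflects that property, and this keeps all the tensor operations on $V$ within a genuine Tannakian subcategory of $\Vect(\stX)$. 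The remaining point, the bookkeeping of Frobenius twists of the base field and of $\pi$, is routine once $k$ is perfect.
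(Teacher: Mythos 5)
Your argument has a genuine gap exactly at the step you yourself flag: the construction of $\sD$, ``the Tannakian category generated by $\sD_1$ and $V$ inside $\Vect(\stX)$''. The category $\Vect(\stX)$ is not abelian, and the only Tannakian category available in the hypotheses of the lemma is $\EF(\Vect(\stX))$ --- which $V$ is not yet known to belong to, since that is precisely the statement being proved. Producing \emph{any} Tannakian full subcategory of $\Vect(\stX)$ containing $V$ is essentially equivalent to the conclusion, so the argument is circular at its central point. Your proposed repair does not work in the stated generality: the lemma assumes only that $\stX$ is an inflexible and pseudo-proper fibred category over $k$ --- no normality, no reducedness, not even algebraicity --- so ``reducedness (which follows from normality)'' is not available, and ``numerically flat'' has no meaning here (there are no curves to test degrees on). Even on a projective variety, the fact that numerically flat bundles form a Tannakian category is exactly the semistability input of Nori/Biswas--Dos Santos that this paper is explicitly structured to avoid; invoking it would defeat the purpose and still would not cover the lemma as stated. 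The subsequent group-theoretic step ($N\subseteq\ker F_{\pi/k}$, hence $\pi$ finite) is fine in spirit once $\sD$ exists, but it rests on the unconstructed object.

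The paper's own proof sidesteps the issue entirely by never trying to put $V$ into a Tannakian category directly: it encodes $V$ as a classifying map $v\colon\stX\arr\sB_{\F_p}(\GL_{n,\F_p})$, notes that $F^{*}V$ corresponds to $\overline F\circ v$ with $\overline F$ the absolute Frobenius of $\sB_{\F_p}(\GL_{n,\F_p})$, uses essential finiteness of $F^{*}V$ to factor $\overline F\circ v$ through a finite gerbe $\Gamma$, and then pulls $\Gamma$ back along $\overline F$. Since the Frobenius of $\GL_{n,\F_p}$ is surjective with finite kernel, $\overline F$ is a finite relative gerbe, so the fibre product $\Delta$ is again a finite gerbe and $v$ factors through it --- giving essential finiteness of $V$ by the criterion in Remark \ref{inflexible}. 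That is the missing idea: transfer the problem to the classifying stack and use finiteness of $\ker(F_{\GL_n})$, rather than generating a Tannakian category from $V$. Two secondary points: your reduction to $k$ algebraically closed (``essential finiteness can be tested after the extension $\overline k/k$'') is itself unjustified in this setting (pseudo-properness and inflexibility of $\stX_{\overline k}$ require an argument, and the absolute Frobenius is not $k$-linear) and is not needed; and the identification of $F^{*}$ with the relative Frobenius of $\pi$ glosses over this semilinearity.
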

\begin{proof}
 We can consider the case $m=1$ only. Set $n=\rk V$. The vector bundle $V$ is given by an  $\F_p$-map $v\colon \stX\arr \sB_{\F_p}(\GL_{n,\F_p})$: the stack $\sB_{\F_p}(\GL_{n,\F_p})$ has a universal vector bundle $\E$ of rank $n$ such that $v^*\E\simeq V$. By  \ref{inflexible} $V$ is essentially finite if and only if $v$ factors as $\stX\arrdi\phi \Gamma \arr \sB_{\F_p}(\GL_{n,\F_p})$ where $\Gamma$ is a finite $k$-gerbe and $\phi$ is $k$-linear. The vector bundle $F^*V$ corresponds to the composition $\stX\arrdi v \sB_{\F_p}(\GL_{n,\F_p})\arrdi{\overline F} \sB_{\F_p}(\GL_{n,\F_p})$, where $\overline F$ is the absolute Frobenius of $\sB_{\F_p}(\GL_{n,\F_p})$. Thus we have a diagram
   \[
  \begin{tikzpicture}[xscale=2.0,yscale=-1.2]
    \node (A0_0) at (0, 0) {$\stX$};
    \node (A0_1) at (1, 0) {$\Delta$};
    \node (A0_2) at (2, 0) {$\sB_{\F_p}(\GL_{n,\F_p})$};
    \node (A1_1) at (1, 1) {$\Gamma$};
    \node (A1_2) at (2, 1) {$\sB_{\F_p}(\GL_{n,\F_p})$};
    \path (A0_0) edge [->]node [auto] {$\scriptstyle{}$} (A0_1);
    \path (A0_0) edge [swap,->]node [auto] {$\scriptstyle{\phi}$} (A1_1);
    \path (A0_1) edge [->]node [auto] {$\scriptstyle{}$} (A1_1);
    \path (A0_2) edge [->]node [auto] {$\scriptstyle{\overline F}$} (A1_2);
    \path (A1_1) edge [->]node [auto] {$\scriptstyle{}$} (A1_2);
    \path (A0_0) edge [->,bend right=40]node [auto] {$\scriptstyle{v}$} (A0_2);
    \path (A0_1) edge [->]node [auto] {$\scriptstyle{}$} (A0_2);
  \end{tikzpicture}
  \]
where $\Gamma$ is a finite $k$-gerbe and the square is $2$-Cartesian. We conclude by showing that $\Delta$ is a finite gerbe over $k$.

The map $\overline F\colon \sB_{\F_p}(\GL_{n,\F_p})\arr \sB_{\F_p}(\GL_{n,\F_p})$ is induced by the Frobenius of $\GL_{n,\F_p}$. Since this last map is a surjective group homomorphism with finite kernel it follows that $\overline F$ and therefore $\Delta\arr \Gamma$ is a finite relative gerbe. This plus the assumption that $\Gamma$ is a finite gerbe implies that $\Delta$ is a finite gerbe.
\end{proof}

We finish this section by explicitly showing how to associate with an \'etale degree $n$ cover an $S_n$-torsor and conversely.
Given a scheme $U$ and an \'etale cover $E\arr U$ of degree $n$ we define $\stS_{E/U}$ as the complement in $E^n$ (the product of $n$-copies of $E$ over $U$) of the open and closed subsets given by the union of the diagonals $E^{n-1}\subseteq E^n$. The symmetric group $S_n$ acts on $\stS_{E/U}$ over $U$, and by looking at the fibers of a geometric point of $U$ we see   that the natural map $$\rho: \stS_{E/U}\times S_n\longrightarrow \stS_{E/U}\times_U\stS_{E/U}$$ sending $(a,s)$ to $(a,as)$ (with $a\in \stS_{E/U}$ and  $s\in S_n$) is a surjective map between two \'etale covers of $U$ of the same degree. Thus $\rho$ is an isomorphism and $\stS_{E/U}$ becomes an $S_n$-torsor over $U$.
\begin{prop}\label{etn to bsn}
 Let $n>0$ and denote by $\Et_n$ the stack over $\Z$ of \'etale covers of degree $n$.
 Then
 \[
 \stS\colon \Et_n\arr \sB S_n\comma E/U\longmapsto \stS_{E/U}
 \]
is an equivalence. Moreover $\pr_1\colon \stS_{E/U}\arr E$ is natural and the action of $S_{n-1}$ on the last components makes it into a $S_{n-1}$-torsor. 
\end{prop}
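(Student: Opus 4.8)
The plan is to exhibit an explicit quasi-inverse to $\stS$ coming from the classical ``associated cover'' construction, and to check that both composites are naturally isomorphic to the identity; every verification is reduced to the split case, since an étale cover of degree $n$ and an $S_n$-torsor are both étale-locally trivial, and there the statement is the tautology already recorded in the paragraph preceding the Proposition. To build the inverse, let $I=\{1,\dots,n\}$, viewed as the constant étale cover $\coprod_{i=1}^{n}\Spec\Z\arr\Spec\Z$ of degree $n$ with its tautological $S_n$-action, and for an $S_n$-torsor $P\arr U$ set $\stE(P/U):=P\times^{S_n}I$, the contracted product (the twist of $I$ by $P$). This is representable by a scheme: it is the fppf descent of the finite, hence affine, $U$-scheme $U\times I$ along $P\arr U$ with respect to the descent datum furnished by the torsor structure together with the $S_n$-action on $I$; étale-locally on $U$ it becomes $U\times I=\coprod_{i=1}^{n}U$, so $\stE(P/U)\arr U$ is an étale cover of degree $n$. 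The assignment is functorial and compatible with base change, so it defines a morphism of stacks $\stE\colon\sB S_n\arr\Et_n$.

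Next I would produce the two natural isomorphisms. Over a geometric point $u$ of $U$, a point of $\stS_{E/U}$ is precisely a bijection $\beta\colon I\arr E_u$, and $S_n$ acts by precomposition; then $[\beta,i]\mapsto\beta(i)$ descends to a morphism $\stS_{E/U}\times^{S_n}I\arr E$ which is an isomorphism of étale covers, as one checks on geometric fibres, and is manifestly natural in $E/U$ — this gives $\stE\circ\stS\simeq\id_{\Et_n}$. Conversely, sending $p\in P_u$ to the bijection $\beta_p\colon I\arr(P\times^{S_n}I)_u$, $i\mapsto[p,i]$, defines an $S_n$-equivariant $U$-morphism $P\arr\stS_{(P\times^{S_n}I)/U}$, hence an isomorphism of $S_n$-torsors, natural in $P$ — this gives $\stS\circ\stE\simeq\id_{\sB S_n}$. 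Equivalently one may argue without naming the inverse: $\stS$ is locally essentially surjective because the trivial torsor $U\times S_n$ is $\stS_{\coprod_{i=1}^n U/U}$, and it is fully faithful because the induced map of étale sheaves $\textup{Isom}_U(E,E')\arr\textup{Isom}_U(\stS_{E/U},\stS_{E'/U})$ becomes, after passing to a cover splitting $E$ and $E'$, the canonical isomorphism of constant sheaves $\underline{S_n}\arr\underline{S_n}$ used in the discussion before the statement.

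Finally, the ``moreover'' part. The first projection $E^n\arr E$ restricts to $\pr_1\colon\stS_{E/U}\arr E$; since $\stS_{E/U}$ and the diagonals $E^{n-1}\subseteq E^n$ are formed by fibre products over $U$, both $\stS_{E/U}$ and $\pr_1$ commute with base change in $E/U$, which gives naturality. Let $S_{n-1}\subseteq S_n$ be the stabilizer of $1$, acting on the last $n-1$ components; it preserves $\pr_1$ and acts freely on $\stS_{E/U}$. Being an $S_{n-1}$-torsor is étale-local on the base, so it suffices to check it after an étale cover of $U$ splitting $E$ (which pulls back to an étale cover of $E$): there $E=\coprod_{i=1}^{n}U$, $\stS_{E/U}$ is the scheme of bijections $\beta\colon I\arr I$, $\pr_1$ is $\beta\mapsto\beta(1)$, and over the $i$-th component the fibre $\{\beta\mid\beta(1)=i\}$ is a torsor under $\Stab(1)=S_{n-1}$ acting by precomposition on $\{2,\dots,n\}$. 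The only points that need a little care — and the places I would expect to spend the most effort — are the left/right conventions for the $S_n$-action in the contracted product, so that $[\beta,i]\mapsto\beta(i)$ and $p\mapsto\beta_p$ are genuinely well defined and equivariant, and the (standard) representability of $P\times^{S_n}I$ by a scheme; neither is a real obstacle, as all the geometric content already resides in the computation of the fibres of $\stS_{E/U}$ over geometric points.
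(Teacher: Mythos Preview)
Your argument is correct. The construction of an explicit quasi-inverse $\stE\colon P\mapsto P\times^{S_n}I$, together with the two natural isomorphisms you describe, does the job, and the verification of the $S_{n-1}$-torsor claim by splitting $E$ locally is also fine.

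The paper proceeds differently, however. Rather than exhibiting an inverse, it first observes that $\Et_n$ is a \emph{neutral} gerbe: the global object $I_\Z=\coprod_{i=1}^n\Spec\Z$ trivializes it, so $\Et_n\simeq\sB G$ with $G=\Autsh_{\Et_n}(I_\Z)$. The composite $\sB G\arr\Et_n\arrdi{\stS}\sB S_n$ is then induced by a single group homomorphism $\phi\colon G\arr S_{n,\Z}$, and one finishes by computing $G$ directly (continuous maps $|U|\arr S_n$, hence the constant group scheme $S_{n,\Z}$) and checking that $\phi$ is the identity on $\Z$-points. This avoids contracted products, descent of $P\times^{S_n}I$, and the bookkeeping of the two natural isomorphisms; the whole equivalence is reduced to identifying one automorphism sheaf. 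Your approach, by contrast, produces the inverse functor explicitly, which is more hands-on and makes transparent that the associated-cover construction really is the inverse --- a fact one might want elsewhere. Both are standard; the paper's is shorter once the neutral-gerbe formalism is in hand, while yours is self-contained and does not invoke that machinery.
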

\begin{proof}
 
The last claim follows directly from the construction. We only have to prove that $\stS$ is an equivalence. Let $I:=\{1,\dots,n\}$. There is a global object $I_\Z=I\times \Spec \Z \in \Et_n(\Z)$. Since all \'etale covers of degree $n$ of a scheme $U$ became locally a disjoint union of copies of the base, it follows that $\Et_n$ is a trivial gerbe, and more precisely, that there is an equivalence
 \[
 \sB G \arr \Et_n \text{ where } G=\Autsh_{\Et_n}(I_\Z)
 \]
 mapping the trivial torsor to $I_\Z$. By the equivalence between neutral affine gerbes and affine group schemes, the composition $\sB G\arr \Et_n \arrdi\stS \sB S_n$ is induced by a morphism of group schemes $\phi:G=\Autsh_{\Et_n}(I_\Z)\arr S_{n,\Z}$. We must show it is an isomorphism. It is easy to see that $G$ sends any scheme $U$ to the set of continuous maps $|U|\arr S_n$, where $|U|$ is the underlying topological space of $U$ and $S_{n}$ is equipped with the discrete topology. Thus $G$ is isomorphic to the constant group scheme $S_{n,\Z}$.  The composition of the isomorphism $S_{n,\Z} = G(\Spec(\Z))\times\Spec(\Z)\cong G$ defined by the inclusion of $\Z$-rational points  with $\phi:G\arr S_{n,\Z}$ is the identity. Thus  $\phi$ is an isomorphism.
\end{proof}

\section{Theorem \ref{Dos Santos and Biswas} and \ref{Dos Santos and Biswas0}}

\begin{proof}[Proof of Theorem \ref{Dos Santos and Biswas} and Theorem \ref{Dos Santos and Biswas0}]
Without loss of generality we can assume that $\Hl^0(\odi\stX)$ is $k$, so that, in particular, $\stX$ is inflexible (see \ref{inflexible}).

If $V\in \EF(\Vect(\sX))$ then by \ref{inflexible} $V$ is the pullback along a morphism $\sX\arr \Gamma$ of a vector bundle on $\Gamma$, where $\Gamma$ is a finite gerbe. Choose a point $\Spec(k')\arr \Gamma$ where $k'/k$ is a finite field extension. Then $V$ is trivialized by the finite flat morphism $\sX\times_{\Gamma} \Spec(k')\arr \sX$. 

Consider now a map $f\colon \stY\arr\stX$ as in the statement, $V\in \Vect(\stX)$ and write $f'\colon \stY'=\Spec (f_*\odi\stY)\arr\stX$ and $r=\rk V$. Since $\Isosh_\stX(V,\odi\stX^r)\arr \stX$ is affine we have that $f^*V$ is free if and only if $f'^*V$ is free. Since by hypothesis $f_*\sO_{\sY}$ is coherent, we can therefore assume that $f\colon\stY\arr\stX$ is a finite map.

We now prove Theorem \ref{Dos Santos and Biswas0}, (1) assuming Theorem \ref{Dos Santos and Biswas}. We can assume that $\stY$ is reduced. Since by Theorem \ref{Dos Santos and Biswas} $\Vect(\sX)_f$ is contained in
the tannakian category $\EF(\Vect(\sX))$, to prove that $\Vect(\sX)_f$ is tannakian it is enough to prove that $\Vect(\sX)_f$ is stable under taking tensor products, dual, kernels and cokernels. Tensor products and dual can be easily checked, while kernels and cokernels are reduced to check the following: given $V\in\Vect(\sX)_f$ and an embedding $W\hookrightarrow V$ or a quotient $V\twoheadrightarrow W$ in $\EF(\Vect(\sX))$ we have $W\in\Vect(\sX)_f$. Write $\sY=\coprod_{i\in I} \sY_i$ for the decomposition into connected components. Each $\sY_i$ is a reduced connected algebraic stack of finite typer over $k$ and, since $\stX$ is strongly pseudo-proper, it is pseudo-proper. Thus $\Hl^0(\sY_i,\sO_{\sY_i})=k'$ is a finite field extension of $k$, and by \cite[Thoerem 4.4, pp. 13]{TZ} $\sY_i$ is inflexible and pseudo-proper over $k'$. Thus $\EF(\Vect(\sY_i))$ is a tannakian category, so the pullback of $W$ along $\sY_i\arr \sX$ is a subobject or a quotient object  of a trivial object, and consequently it is trivial itself. Thus $f^*W$ is free on each component $\stY_i$ and of the correct rank $\rk W$, which means that $f^*W$ is free on $\stY$.

We now come back to the proof of Theorem \ref{Dos Santos and Biswas} and of Theorem \ref{Dos Santos and Biswas0}, (2).
We show first how we can reduce the proof of Theorem \ref{Dos Santos and Biswas} to the case that the generic fiber of $f$ is \'etale.
Let $h\colon U\arr\stX$ be a smooth atlas and $\xi\in U$ be a generic point such that $h(\xi)$ is the generic point of $\stX$. Set $K=k(\xi)$ and consider $h(\xi)\colon \Spec K\arr \stX$ and $\Spec A =\Spec(K)\times_{\sX}\sY$. Notice that $A\neq 0$ because $\stY\arr\stX$ is surjective.
 By \cite[Lemma 2.3, pp. 8]{TZ} there exists $i\geq 0$ such that all residue fields of $A^{(i,K)}$, the $i$-th Frobenius twist of $A$, are separable over $K$. We have the following Cartesian diagrams
   \[
  \begin{tikzpicture}[xscale=2.3,yscale=-1.2]
    \node (A0_0) at (0, 0) {$\Spec B$};
    \node (A0_1) at (1, 0) {$\stY_{\textup{red}}^{(i,\stX)}$};
    \node (A1_0) at (0, 1) {$\Spec(A^{(i,K)})$};
    \node (A1_1) at (1, 1) {$\stY^{(i,\stX)}$};
    \node (A1_2) at (2, 1) {$\stY$};
    \node (A2_0) at (0, 2) {$\Spec K$};
    \node (A2_1) at (1, 2) {$\stX$};
    \node (A2_2) at (2, 2) {$\stX$};
    \path (A0_1) edge [->]node [auto] {$\scriptstyle{}$} (A1_1);
    \path (A0_0) edge [->]node [auto] {$\scriptstyle{}$} (A0_1);
    \path (A2_0) edge [->]node [auto] {$\scriptstyle{h(\xi)}$} (A2_1);
    \path (A1_0) edge [->]node [auto] {$\scriptstyle{}$} (A1_1);
    \path (A1_1) edge [->]node [auto] {$\scriptstyle{}$} (A1_2);
    \path (A1_0) edge [->]node [auto] {$\scriptstyle{}$} (A2_0);
    \path (A1_1) edge [->]node [auto] {$\scriptstyle{}$} (A2_1);
    \path (A0_0) edge [->]node [auto] {$\scriptstyle{}$} (A1_0);
    \path (A2_1) edge [->]node [auto] {$\scriptstyle{F^i}$} (A2_2);
    \path (A1_2) edge [->]node [auto] {$\scriptstyle{f}$} (A2_2);
  \end{tikzpicture}
  \]
where $(-)_\red$ denotes the reduction and $F$ is the absolute Frobenius of $\stX$. Set $f'\colon \sY_{\red}^{(i,\sX)}\arr \stX$ for the composition. Since $h(\xi)$ is the composition of a smooth map $U\arr \stX$ and a generic point $\Spec k(\xi)\arr U$, we can conclude that $B$ is reduced. Since it is a quotient of $A^{(i,K)}$ we can conclude that $B/K$ is \'etale over $K$, that is, the generic fiber of $f'$ is \'etale.
Notice that if $f^*V$ is free then $f'^*(F^{i*}V)$ is free and, by \ref{Frobenius pullback of essentially finite}, $V$ is essentially finite if  ${F^i}^*V$ is essentially finite.
Thus we can replace $\stY\arr\stX$ by $\stY_{\textup{red}}^{(i,\stX)}\arr\stX$ and assume that the generic fiber of $f$ is \'etale of degree say $n\geq 0$.

For each smooth map $U\arr\sX$, where $U$ is a connected scheme, let $Y_U:=\sY\times_{\sX}U$. Since the generic point of $U$ goes to the generic point of $\sX$, the generic fibre $L_U$ of $Y_U\arr U$ is finite \'etale of degree $n$. By \ref{etn to bsn} $L_U$ corresponds to  an $S_n$-torsor $\stS_{L_U/K(U)}$ over $\Spec(K(U))$, where $K(U)$ is the function field of $U$. Let $\Theta_{U}$ be the  normalization of $U$ inside $S_{L_U/K(U)}=\Hl^0(\stS_{L_U/K(U)})$. Then $\Theta_{U}$ is equipped with an action of $S_n$ and  a morphism $\Theta_U\arr Y_U$ define by the projection $\sS_{L_U/K(U)}\arr \Spec(L_U)$ (the last claim of \ref{etn to bsn}). This construction is functorial.
If $V\arr U$ is a smooth morphism then by \ref{etn to bsn} and \cite[\href{http://stacks.math.columbia.edu/tag/03GC}{03GC}]{SP} we have $$s_{U,V}:S_{L_U/K(U)}\otimes_{K(U)} K(V)\cong S_{L_V/K(V)},\hspace{15pt} \theta_{U,V}: (\Theta_U\times_UV\arr Y_U\times_UV)\cong(\Theta_V\arr Y_V) $$ and also the action of $S_n$ on $\Theta_V$ is the same as the one obtained by the pullback of the action on $\Theta_U$. Moreover, if 
there is a third map $W\arr V$, then $s_{U,V}, s_{V,W}, s_{U,W}$ and $\theta_{U,V}, \theta_{V,W}, \theta_{U,W}$ are compatible in a natural way. This allow us to construct maps $\Theta_\stX\arr\stZ\arr \stX$ fitting in $2$-Cartesian diagrams 
  \[
  \begin{tikzpicture}[xscale=2.3,yscale=-1.2]
    \node (A0_0) at (0, 0) {$\Theta_U$};
    \node (A0_1) at (1, 0) {$\Theta_\stX$};
    \node (A0_2) at (2, 0) {$\Spec k$};
    \node (A1_0) at (0, 1) {$[\Theta_U/S_n]$};
    \node (A1_1) at (1, 1) {$\stZ$};
    \node (A1_2) at (2, 1) {$\sB_k S_n$};
    \node (A2_0) at (0, 2) {$U$};
    \node (A2_1) at (1, 2) {$\stX$};
    \path (A0_1) edge [->]node [auto] {$\scriptstyle{\pi}$} (A1_1);
    \path (A0_0) edge [->]node [auto] {$\scriptstyle{}$} (A0_1);
    \path (A2_0) edge [->]node [auto] {$\scriptstyle{}$} (A2_1);
    \path (A1_0) edge [->]node [auto] {$\scriptstyle{}$} (A1_1);
    \path (A0_2) edge [->]node [auto] {$\scriptstyle{}$} (A1_2);
    \path (A1_1) edge [->]node [auto] {$\scriptstyle{}$} (A1_2);
    \path (A1_0) edge [->]node [auto] {$\scriptstyle{}$} (A2_0);
    \path (A1_1) edge [->]node [auto] {$\scriptstyle{h}$} (A2_1);
    \path (A0_0) edge [->]node [auto] {$\scriptstyle{}$} (A1_0);
    \path (A0_1) edge [->]node [auto] {$\scriptstyle{}$} (A0_2);
  \end{tikzpicture}
  \]
  and a map $\Theta_{\sX}\to \sY$ fitting in the 2-diagram
 $$\xymatrix{\Theta_{\sX}\ar[rr]\ar[rd]_-{h\pi}&& \sY\ar[ld]^-f\\ &\sX &}$$ for all smooth $U\arr \stX$ with $U$ being connected, where $\sZ$ is the following category fibred over $\sX$: for each $t:T\arr\sX$, $\sZ(T)$ is the category consisting of diagrams 
 \[
\xymatrix{
P \ar[dr]_y \ar[r]^x &
\Theta_T \ar[d] \ar[r] &\Theta_{\sX}\ar[d]^{h\pi} \\
&T \ar[r]^t &\sX}
\]
where $y: P\arr T$ is a torsor under $S_n$, $\Theta_T$ is the pullback, and $x$ is a map of schemes which is $S_n$-equivariant. Just as in the case when $\Theta_{\sX}$ is a scheme and $\sZ=[\Theta_{\sX}/S_n]$, one can show that the diagram $$\xymatrix{
\Theta_\sX \ar[d]^\pi \ar[r] &\Spec k\ar[d] \\
\sZ \ar[r] &\sB_kS_n}$$
is Cartesian. In particular $\stZ$ is an algebraic stack of finite type over $k$.
Since $\Vect(\sX)_f\subseteq \Vect(\sX)_{h\pi}$ is fully faithful, by \cite[Remark B7]{TZ} we can assume $\stY=\Theta_\stX$ and $f=h\pi$ for both Theorem \ref{Dos Santos and Biswas} and Theorem \ref{Dos Santos and Biswas0}, (2).

Consider the following $2$-Cartesian diagram
  \[
  \begin{tikzpicture}[xscale=2.3,yscale=-1.2]
    \node (A0_0) at (0, 0) {$\Theta_\stX$};
    \node (A0_1) at (1, 0) {$\stX$};
    \node (A0_2) at (2, 0) {$\Spec k$};
    \node (A1_0) at (0, 1) {$\stZ$};
    \node (A1_1) at (1, 1) {$\stX\times\sB_k S_n$};
    \node (A1_2) at (2, 1) {$\sB_k S_n$};
    \node (A2_1) at (1, 2) {$\stX$};
    \node (A2_2) at (2, 2) {$\Spec k$};
    \path (A1_0) edge [->]node [auto,swap] {$\scriptstyle{h}$} (A2_1);
    \path (A0_0) edge [->]node [auto] {$\scriptstyle{f}$} (A0_1);
    \path (A2_1) edge [->]node [auto] {$\scriptstyle{}$} (A2_2);
    \path (A1_0) edge [->]node [auto] {$\scriptstyle{u}$} (A1_1);
    \path (A0_1) edge [->]node [auto] {$\scriptstyle{}$} (A0_2);
    \path (A1_1) edge [->]node [auto] {$\scriptstyle{}$} (A1_2);
    \path (A0_2) edge [->]node [auto] {$\scriptstyle{}$} (A1_2);
    \path (A1_1) edge [->]node [auto] {$\scriptstyle{v}$} (A2_1);
    \path (A0_0) edge [->]node [auto] {$\scriptstyle{\pi}$} (A1_0);
    \path (A0_1) edge [->]node [auto] {$\scriptstyle{}$} (A1_1);
    \path (A1_2) edge [->]node [auto] {$\scriptstyle{}$} (A2_2);
  \end{tikzpicture}
  \]
Since $u$ is finite and $v$ is proper we can conclude that the map $h\colon \stZ\arr\stX$ maps coherent sheaves to coherent sheaves. Thus $\stZ$ is a strongly pseudo-proper and normal algebraic stack of finite type over $k$. We are going to show that $\odi\stX\arr h_*\odi\stZ$ is an isomorphism. In particular it will follow that $\Hl^0(\odi\stZ)=k$ and therefore that $\stZ$ is inflexible over $k$.
%
%
Given $U=\Spec A\arr \stX$ a smooth map with $U$ being connected and written $\Theta_U=\Spec B$, the push forward of the structure sheaf along $[\Theta_U/S_n]\arr U$ is $B^{S^n}$. and we must show that $B^{S^n}\subseteq A$. Set $K_A$ for the function field of $A$ and $K_B$ for the generic fibre of $B/A$, that is $K_B=S_{L_U/K_A}$. We have $B^{S_n}\subseteq B\bigcap K_B^{S_n}=B\bigcap K_A=A$ because $A$ is normal and $K_B$ is an $S_n$-torsor over $K_A$.

Let $V$ be a vector bundle on $\sX$ which is trivialized by $f$. Since $\stZ$ is pseudo-proper and inflexible, by \ref{finite etale trivialization} $h^*V$ is essentially finite. Thus there is a finite gerbe $\Gamma$ and a 2-commutative diagram $$\xymatrix{\stZ\ar[d]^-h\ar[rr]&&\Gamma\ar[d]^{\lambda}\\ \sX\ar[rr]^-{\phi}&& \sB{\rm GL}_m}$$  where $\phi$  corresponds to the vector bundle $V$. Replacing $\Gamma$ by its  image under $\lambda$ we may assume that $\lambda$ is faithful. Since $\Gamma$ is finite it follows that $\lambda$ is affine thanks to \cite[Remark B7]{TZ}. As $h_*\sO_{\stZ}=\sO_{\sX}$ the unique map $\stZ\arr \sX\times_{\sB{\rm GL}_m}\Gamma$ induces 
$$
\Spec (h_*\odi\stZ )=\stX\arr  \sX\times_{\sB{\rm GL}_m}\Gamma\arr \Gamma
$$
Then $\phi$ factors though $\lambda: \Gamma\arr \sB{\rm GL}_m$, so that $V$ is essentially finite. This ends the proof of Theorem \ref{Dos Santos and Biswas}.

For thereom \ref{Dos Santos and Biswas0}, since $h_*\odi\stZ= \odi\stX$, we have that the pullback $\Vect(\stX)\arr\Vect(\stZ)$ is fully faithful (see \cite[Lemma 7.17]{BV}). In particular we get a fully faithful monoidal map $\Vect(\stX)_f \arr \Vect(\stZ)_\pi$. By \ref{finite etale trivialization} $\Vect(\stZ)_\pi$ corresponds to a gerbe $\Gamma_\pi$ affine over $\sB_k S_n$ and thus finite and \'etale. By \cite[Remark B7]{TZ} it follows that the gerbe associated with $\Vect(\stX)_f$ is a quotient of $\Gamma_\pi$ as required. 
\end{proof}

\end{document}